\documentclass[11pt]{amsart}

\textwidth 16.00cm
\textheight 20cm
\topmargin 0.0cm
\oddsidemargin 0.0cm

\evensidemargin 0.0cm
\parskip 0.0cm

\textwidth=5.4in \textheight=8in

\usepackage{amssymb}
\usepackage{mathrsfs}
\usepackage{amsfonts}
\usepackage{latexsym,amsmath,amsthm,amssymb,amsfonts}
\usepackage[usenames]{color}
\usepackage{xspace,colortbl}
\usepackage{graphicx}
\usepackage{tipa}

\newcommand{\be}{\begin{equation}}
\newcommand{\ee}{\end{equation}}
\newcommand{\beq}{\begin{eqnarray}}
\newcommand{\eeq}{\end{eqnarray}}

\newtheorem{prop}{Proposition}[section]

\newtheorem{remark}[prop]{Remark}

\def\begeq{\begin{equation}}
\def\endeq{\end{equation}}

\def\odot{\setbox0=\hbox{$\bigcirc$}\relax \mathbin {\hbox
to0pt{\raise.5pt\hbox to\wd0{\hfil $\wedge$\hfil}\hss}\box0 }}

\numberwithin{equation} {section}


\numberwithin{equation}{section}
\textheight=8.8in
\textwidth=6.28in
\topmargin=0mm
\oddsidemargin=0mm
\evensidemargin=0mm

\newtheorem{theorem}{\bf Theorem}[section]

\newtheorem{lemma}[theorem]{\bf Lemma}

\allowdisplaybreaks

\begin{document}
\title[Pogorelov type estimates]
 {Pogorelov type estimates for a class of Hessian quotient equations in Lorentz-Minkowski space $\mathbb{R}^{n+1}_{1}$}

\author{
 Chenyang Liu,~~Jing Mao$^{\ast}$,~~Yating Zhao}

\address{
Faculty of Mathematics and Statistics, Key Laboratory of Applied
Mathematics of Hubei Province, Hubei University, Wuhan 430062, China
}

\email{ 1109452431@qq.com, jiner120@163.com, 2505207483@qq.com}

\thanks{$\ast$ Corresponding author}

\date{}
\maketitle
\begin{abstract}
Let $\Omega$ be a bounded domain (with smooth boundary) on the
hyperbolic plane $\mathscr{H}^{n}(1)$, of center at origin and
radius $1$, in the $(n+1)$-dimensional Lorentz-Minkowski space
$\mathbb{R}^{n+1}_{1}$. In this paper, by using a priori estimates,
we can establish Pogorelov type estimates of $k$-convex solutions to
a class of Hessian quotient equations defined over
$\Omega\subset\mathscr{H}^{n}(1)$ and with the vanishing Dirichlet
boundary condition.
\end{abstract}

\maketitle {\it \small{{\bf Keywords}: Hessian quotient equations,
$k$-convex, Lorentz-Minkowski space, Dirichlet boundary condition, a
priori estimates.
 }

{{\bf MSC 2020}: 35J60, 35B45, 53C50.}}

\section{Introduction}

As we know, Pogorelov \emph{firstly} \cite{po1} obtained Pogorelov's
$C^2$ interior estimates for Monge-Amp\`{e}re equations (see also
\cite{gt1}). Later, Chou and Wang \cite{cw1,w1} improved this result
to a more general setting -- they can obtain Pogorelov type
estimates for $k$-Hessian equations. More precisely, for a bounded
Euclidean domain $\Omega\subset\mathbb{R}^n$ with smooth boundary
$\partial\Omega$, they considered the following Dirichlet problem of
the $k$-Hessian equation
\begin{equation}\label{sk-1}
\left\{
\begin{aligned}
& \sigma_{k}(\nabla^{2}u)=f(x,u) \qquad &&\mathrm{in}~ \Omega,
\\
&u=0 \qquad&&\mathrm{on}~\partial\Omega,
\end{aligned}
\right.
\end{equation}
with as usual\footnote{~We make an agreement that in this paper,
$\sigma_{k}(\lambda(\cdot))$ would denote the $k$-th elementary
symmetric
 function of eigenvalues of a given tensor.}
\begin{eqnarray} \label{sk-def}
\sigma_{k}(\nabla^{2}u):=\sigma_{k}(\lambda(\nabla^{2}u))=\sigma_{k}(\lambda_{1},
\lambda_{2}, \cdots, \lambda_{n})=\sum\limits_{1\leq
i_{1}<i_{2}<\cdots<i_{k}\leq
n}\lambda_{i_{1}}\lambda_{i_{2}}\cdots\lambda_{i_{k}}
\end{eqnarray}
 the $k$-th elementary symmetric function of eigenvalues
$(\lambda_{1}, \lambda_{2}, \cdots,
\lambda_{n})=\lambda(\nabla^{2}u)$ of the Hessian matrix
$\nabla^{2}u$, and successfully proved that if $u\in C^{2}(\Omega)$
is a $k$-convex solution to the Hessian equation (\ref{sk-1}), then
there exists a constant $\beta>0$ such that
\begin{eqnarray*}
\sup\limits_{\Omega}(-u)^{\beta}|\nabla^{2}u|\leq C
\end{eqnarray*}
for some constant\footnote{~In the sequel, many constants would
appear, and, by abuse of notations and without any potential
confusion, we prefer to use the notation $C$ to represent some of
them.} $C$. Here, $u$ is said to be
 $k$-convex if $\lambda(\nabla^{2}u)$ belongs to the Garding's cone
 (see also (\ref{gard-co}))
\begin{eqnarray*}
\Gamma_{k}=\{\lambda\in\mathbb{R}^{n}|\sigma_{j}(\lambda)>0,
~~j=1,2,\ldots,k\}.
\end{eqnarray*}
In some literatures, ``$k$-convex" is also called ``$k$-admissible",
and we have also used this terminology -- see, e.g.,
\cite{GLM,ggm,glm}. Several years ago, by imposing a stronger
assumption, Li-Ren-Wang \cite{lrw} improved Chou-Wang's above result
to a more general setting, that is, if furthermore  $f$ in the RHS
of the first equation in (\ref{sk-1}) depends also on the gradient
term $\nabla u$, then they can also establish Pogorelov type
estimates for $(k+1)$-convex solutions to the Dirichlet problem of
the $k$-Hessian equation. Besides, there is a special case -- when
$k=2$, their $3$-convex assumption can be replaced by $2$-convexity
to derive the Pogorelov type estimates.
 Based on these conclusions, one might
ask a natural question as follows:
 \vspace{2mm}
\begin{itemize}
\item \textbf{Problem 1}. Whether Pogorelov type estimates are still valid for Hessian quotient equations
\begin{eqnarray*}
\frac{\sigma_{k}(\lambda(\nabla^{2}u))}{\sigma_{l}(\lambda(\nabla^{2}u))}=f(x,u,\nabla
u)
\end{eqnarray*}
or not?
\end{itemize}
\vspace{3mm} As far as we know, \textbf{Problem 1} is still open.

In 2020, Chu and Jiao \cite{cj} considered the Hessian type equation
with vanishing Dirichlet boundary condition (DBC for short)
\begin{equation}\label{hte-1}
\left\{
\begin{aligned}
& \sigma_{k}(\lambda(\widetilde{U}[u]))=f(x,u,\nabla u) \qquad
&&\mathrm{in}~ \Omega,
\\
&u=0 \qquad&&\mathrm{on}~\partial\Omega
\end{aligned}
\right.
\end{equation}
on a bounded domain $\Omega\subset\mathbb{R}^n$ with smooth boundary
$\partial\Omega$, where $\widetilde{U}[u]:=(\Delta
u)\cdot\mathbb{I}-\nabla^{2}u$ with $\Delta u$ the Laplacian of $u$
and $\mathbb{I}$ the identity matrix, and, as before,
$\sigma_{k}(\lambda(\widetilde{U}[u]))$ stands for the $k$-th
elementary symmetric function of eigenvalues of the $(0,2)$-type
tensor $\widetilde{U}[u]$. They can establish Pogorelov type
estimates for $k$-convex solutions to (\ref{hte-1}). Recently,
inspired by this work, Chen, Tu and Xiang \cite{ctx} investigated
the Hessian quotient version of (\ref{hte-1}) as follows
\begin{equation}\label{hte-2}
\left\{
\begin{aligned}
&
\frac{\sigma_{k}(\lambda(\widetilde{U}[u]))}{\sigma_{l}(\lambda(\widetilde{U}[u]))}=f(x,u,\nabla
u) \qquad &&\mathrm{in}~ \Omega\subset\mathbb{R}^n, \qquad l+2\leq
k\leq n,~l>0,
\\
&u=0 \qquad&&\mathrm{on}~\partial\Omega,
\end{aligned}
\right.
\end{equation}
and successfully obtained Pogorelov type estimates for $k$-convex
solutions to (\ref{hte-2}), which improves Chu-Jiao's result
mentioned above a lot. When  $\widetilde{U}[u]$ in the problem
(\ref{hte-2}) was replaced by $\tau (\Delta
u)\cdot\mathbb{I}-\nabla^2 u$, $\tau\geq1$, Qing Dai have
established the similar result in her master's dissertation
\cite{qd}.

Caffarelli-Nirenberg-Spruck \cite{cns1,cns2} and Trudinger \cite{nt}
considered a class of fully nonlinear elliptic equations with DBC,
which covers (\ref{hte-2}) as a special case, and obtained the
existence of solutions under suitable assumptions. It is easy to see
that if $k=n$ in (\ref{hte-1}), then the corresponding Dirichlet
problem degenerates into a Monge-Amp\`{e}re type equation with
vanishing DBC. Harvey-Lawson \cite[Example 4.3.2]{hl4} studied the
$p$-convex ($1\leq p\leq n$) solutions to this Monge-Amp\`{e}re type
equation with vanishing DBC and solved the Dirichlet problem with
$f=0$ on suitable domains. Besides, they introduced the
$(n-1)$-convexity\footnote{~A function $u$ is called $(n-1)$-convex
if the matrix
 $
 (\Delta u)\cdot\mathbb{I}-\nabla^{2}u
 $
 is nonnegative definite. } or the general $p$-convexity ($1\leq p\leq n$) for
solutions to the Dirichlet problem of nonlinear elliptic equations
in a series of papers \cite{hl1,hl2,hl3}.

Similar to the concept of $(n-1)$-convexity, for a
\emph{complex-valued} function $u$, it is called
$(n-1)$-plurisubharmonic if $
 (\Delta u)\cdot\mathbb{I}-\nabla^{2}u
 $ is nonnegative definite, and this concept was also introduced by
 Harvey-Lawson \cite{hl2-1,hl2}. Clearly, complex Monge-Amp\`{e}re type
 equations for $(n-1)$-plurisubharmonic functions can be defined as
 follows
  \begin{eqnarray} \label{cma}
 \det\left(\left(\sum\limits_{m=1}^{n}\frac{\partial^{2}u}{\partial
 z_{m}\partial\bar{z}_{m}}\right)\delta_{ij}-\frac{\partial^{2}u}{\partial
 z_{i}\partial\bar{z}_{j}}\right)=f.
  \end{eqnarray}
For strict pseudo-convex domains in the complex Euclidean $n$-space
$\mathbb{C}^{n}$, the Dirichlet problem of (\ref{cma}) with positive
$f$ was solved by Li \cite{lsy}. On compact complex manifolds, the
Eq. (\ref{cma}) was investigated by Tosatti-Weinkove
\cite{tow1,tow2}, and moreover, it was shown that this equation has
relation with the Gauduchon conjecture in complex geometry (see
\cite{gp,stw}). For more progresses on the Eq. (\ref{cma}), we refer
readers to \cite{fww1,fww2,gs1} and references therein.

Elliptic equations of type similar to the one in (\ref{hte-1}) also
arise naturally in conformal geometry --  see \cite[pp.
274-275]{ctx} for a brief explanation and check \cite{sz} for
details.

Therefore, from the above introduction, it should be interesting and
meaningful to study the Dirichlet problem (\ref{hte-1}) and its more
general version (\ref{hte-2}).

In this paper, we study the Lorentz-Minkowski version of the problem
(\ref{hte-2}) and try to get the related Pogorelov type estimates.
In order to state our main result clearly, we need to give some
notions. Throughout this paper, let $\mathbb{R}^{n+1}_{1}$ be the
$(n+1)$-dimensional ($n\geq2$) Lorentz-Minkowski space with the
following Lorentzian metric
\begin{eqnarray*}
\langle\cdot,\cdot\rangle_{L}=dx_{1}^{2}+dx_{2}^{2}+\cdots+dx_{n}^{2}-dx_{n+1}^{2}.
\end{eqnarray*}
In fact, $\mathbb{R}^{n+1}_{1}$ is an $(n+1)$-dimensional Lorentz
manifold with index $1$. Denote by
\begin{eqnarray*}
\mathscr{H}^{n}(1)=\{(x_{1},x_{2},\cdots,x_{n+1})\in\mathbb{R}^{n+1}_{1}|x_{1}^{2}+x_{2}^{2}+\cdots+x_{n}^{2}-x_{n+1}^{2}=-1~\mathrm{and}~x_{n+1}>0\},
\end{eqnarray*}
which is exactly the hyperbolic plane of center $(0,0,\ldots,0)$
(i.e., the origin of $\mathbb{R}^{n+1}$) and radius $1$ in
$\mathbb{R}^{n+1}_{1}$. Clearly,
$\mathscr{H}^{n}(1)\subset\mathbb{R}^{n+1}_{1}$ is a spacelike
hypersurface  which is simply-connected Riemannian $n$-manifold with
constant negative curvature and is geodesically complete. This is
the reason why we call $\mathscr{H}^{n}(1)$ a hyperbolic plane. Let
$\Omega\subset\mathscr{H}^{n}(1)$ be a bounded domain with smooth
boundary $\partial\Omega$, and then consider the following Hessian
quotient equation with vanishing DBC
\begin{equation}\label{main-eq}
\left\{
\begin{aligned}
&
\frac{\sigma_{k}(\lambda(U[u]))}{\sigma_{l}(\lambda(U[u]))}=f(x,u,\nabla
u) \qquad &&\mathrm{in}~
\Omega\subset\mathscr{H}^{n}(1)\subset\mathbb{R}^{n+1}_{1}, \qquad
l+2\leq k\leq n,~l\geq0,
\\
&u=0 \qquad&&\mathrm{on}~\partial\Omega,
\end{aligned}
\right.
\end{equation}
where $u(x)$ is a function defined over $\Omega$, $U[u]:=\tau(\Delta
u)\cdot\mathbb{I}-\nabla^{2}u$ with $\tau\geq1$, and, with the abuse
of notations, $\nabla$, $\Delta$, $\nabla^{2}$ are the gradient, the
Laplace and the Hessian operators on $\mathscr{H}^{n}(1)$
respectively. For the Dirichlet problem (\ref{main-eq}), we can
prove:

\begin{theorem} \label{main-th}
Suppose that $k>l+1$, $u\in C^{4}(\Omega)\cap
C^{2}(\overline{\Omega})$ is a solution to the Hessian quotient
equation \eqref{main-eq} with $\lambda(U[u])\in \Gamma_{k}$, $f$ is
a positive smooth function. Then there exists a constant $\beta>0$
such that for $n\geq k\geq l+2$, we have
\begin{eqnarray*}
\sup\limits_{x\in\Omega}(-u)^{\beta}|\nabla^{2}u|(x)\leq C,
\end{eqnarray*}
where $C$ depends on $n$, $k$, $l$, $\tau$, $\sup_{\Omega}|u|$ and $
\sup_{\Omega}|\nabla u|$.
\end{theorem}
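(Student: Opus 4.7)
My plan is to adapt the classical Pogorelov argument to the Hessian quotient setting on the hyperbolic plane, following the strategy of Chou--Wang \cite{cw1}, Li--Ren--Wang \cite{lrw} and the recent Euclidean treatment of Chen--Tu--Xiang \cite{ctx}, with curvature corrections coming from $\mathscr{H}^n(1)$. The starting point is the test function
\[
W(x,\xi) = \log U_{\xi\xi}[u] + \alpha \log(-u) + \frac{\mu}{2}|\nabla u|^2,
\]
where $\xi$ is a unit tangent vector on $\mathscr{H}^n(1)$, and $\alpha,\mu>0$ are constants to be chosen later. Since $u$ is $k$-convex with vanishing DBC, one has $u<0$ in the interior of $\Omega$, so $\alpha\log(-u)$ drives $W\to-\infty$ as $x\to\partial\Omega$; consequently $W$ attains its maximum at an interior point $x_0$ in some direction $\xi_0$. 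Choosing a local orthonormal frame at $x_0$ that diagonalizes $U[u]$ and arranging $\xi_0=e_1$, the task reduces to an a priori bound on $U_{11}[u](x_0)$; unravelling the test function then yields $(-u)^\alpha|\nabla^2 u|\le C$ globally.

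Denote the linearized operator by $F^{ij}=\partial(\sigma_k/\sigma_l)/\partial U_{ij}$, which is elliptic on $\Gamma_k$; for $k\ge l+2$ the function $\log(\sigma_k/\sigma_l)$ is concave on $\Gamma_k$, a fact I would use crucially. At $x_0$ the first-order condition $\nabla W=0$ gives
\[
\frac{\nabla_i U_{11}}{U_{11}} = -\alpha\,\frac{\nabla_i u}{-u} - \mu \sum_j u_j\, u_{ij},
\]
while the second-order condition $F^{ii}\nabla_{ii}W\le 0$, combined with differentiating \eqref{main-eq} once and twice along $e_1$, produces a pointwise inequality in which the third-order derivatives of $u$ enter through $F^{ii}\nabla_{ii}U_{11}$ and $F^{ij,rs}\nabla_1 U_{ij}\nabla_1 U_{rs}$. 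Commuting covariant derivatives on $\mathscr{H}^n(1)$ introduces curvature corrections; owing to the constant sectional curvature $-1$ these take the clean form $F^{ii}(U_{11}-U_{ii})$ and $F^{ii}U_{ii}$, and their sign can be controlled.

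The main obstacle, and the crux of the argument, is controlling the ``bad'' third-order terms that survive after the two conditions are combined. Here I would invoke the concavity inequality for Hessian quotients, of the schematic form
\[
-F^{ij,rs}\nabla_1 U_{ij}\nabla_1 U_{rs} \ge \Bigl(1+\tfrac{1}{k-l}\Bigr)\frac{(F^{ii}\nabla_1 U_{ii})^2}{\sigma_k/\sigma_l} - C\sum_i F^{ii}\,U_{11},
\]
available precisely when $k>l+1$ (cf.\ \cite{ctx,lrw}). Substituting the first-order condition turns the square terms in $\nabla_i U_{11}/U_{11}$ into $\alpha^2(\nabla_i u)^2/u^2$ plus cross terms controlled by $\mu F^{ii}u_{ij}^2$. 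A delicate new point is the $\tau(\Delta u)\cdot\mathbb{I}$ piece in $U[u]$: since $\tr U[u]=(\tau n-1)\Delta u$ is coupled into every diagonal entry of $U[u]$, the inversion $\nabla^2 u=\frac{\tau}{\tau n-1}(\tr U)\mathbb{I}-U[u]$ must be applied repeatedly to keep all estimates uniform in $\tau\ge 1$.

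After these cancellations one arrives at a schematic inequality of the form
\[
0 \ge \mu\,F^{ii}U_{ii}^2 + \alpha^2\frac{|\nabla u|^2}{u^2}\sum_i F^{ii} - C\Bigl(1+\textstyle\sum_i F^{ii}\Bigr)U_{11},
\]
and standard structural estimates for Hessian quotients on $\Gamma_k$ (namely the positive lower bound on $\sum_i F^{ii}$ together with Newton--MacLaurin-type lower bounds on $F^{ii}U_{ii}^2$) then deliver the required bound $U_{11}(x_0)\le C$. The final bookkeeping task is to choose $\mu$ large enough relative to $\alpha$ so that the curvature errors from $\mathscr{H}^n(1)$ and the mixed $\tau$-dependent terms are absorbed into the positive terms; this yields Theorem \ref{main-th} with $\beta=\alpha$.
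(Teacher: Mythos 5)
Your overall skeleton (logarithm of a largest eigenvalue plus $\alpha\log(-u)$ plus a gradient term, interior maximum forced by $u<0$, concavity of $(\sigma_{k}/\sigma_{l})^{1/(k-l)}$, Ricci identities with curvature $-1$) belongs to the same family as the paper's argument, but two of your concrete choices leave genuine gaps. First, your test function omits the absorbing term on which the scheme of \cite{ctx} -- which you say you are following -- depends. The paper adds $A\,\rho(q,x)$, the distance to a point $q$ chosen outside $\Omega$, and uses $\mathrm{Hess}\,\rho\geq0$ together with the comparison bound \eqref{nd-2}, $\Delta\rho=(n-1)\coth\rho\geq(n-1)\coth(c^{+})>0$, to produce the positive term $A\,T^{11}(n-1)\coth(c^{+})$ in \eqref{main-ieq}; this is the hyperbolic substitute for the term $A\sum_{i}T^{ii}$ generated by $\tfrac{A}{2}|x|^{2}$ in (3.5) of \cite{ctx}, and its role is to absorb the negative terms (including the extra curvature--commutator terms arising on $\mathscr{H}^{n}(1)$) proportional to the trace of the linearized operator. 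Your final schematic inequality $0\geq\mu F^{ii}U_{ii}^{2}+\alpha^{2}\frac{|\nabla u|^{2}}{u^{2}}\sum_{i}F^{ii}-C\bigl(1+\sum_{i}F^{ii}\bigr)U_{11}$ hides exactly those terms and does not close by ``standard structural estimates'': with your normalization $U_{11}=\lambda_{\max}(U)$, the weight $F^{11}$ is the \emph{smallest} of the $F^{ii}$ (cf. \eqref{FL-2}), so $\mu F^{ii}U_{ii}^{2}\geq\mu F^{11}U_{11}^{2}$ need not dominate $C\sum_{i}F^{ii}U_{11}$, and $|\nabla u|^{2}/u^{2}$ has no positive lower bound. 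Closing this point is precisely where \cite{ctx} and the paper need the case analysis built on \eqref{FL-1} together with the $A$-term; that, and not a concavity constant of the form $1+\frac{1}{k-l}$ (the concavity of the quotient power holds for all $0\leq l<k$), is where the hypothesis $k\geq l+2$ genuinely enters. You offer no substitute mechanism for any of this.

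Second, the operator you contract with is mismatched with your test quantity. The paper applies the linearization with respect to $\nabla^{2}u$, namely $T^{ij}=\tau(\sum_{m}F^{mm})\delta^{ij}-F^{ij}$, to $\log\lambda_{\max}(\nabla^{2}u)$, so that $T^{ii}u_{11ii}$ coincides, up to curvature commutators from \eqref{Ricci-id} and \eqref{sectional-cur} which are bounded (indeed constant) here, with the equation differentiated twice in the $e_{1}$ direction. If instead you contract $\nabla^{2}W\leq0$ with $F^{ij}$ while $W$ contains $\log U_{\xi\xi}$, then $F^{ii}\nabla_{ii}U_{\xi\xi}$ contains $\tau\sum_{i}F^{ii}\nabla_{ii}\Delta u$, and after substituting the once- and twice-differentiated equation you are left with fourth-order quantities of the type $\bigl(\sum_{i}F^{ii}\bigr)\bigl(\tau\Delta(\Delta u)-\nabla_{\xi\xi}\Delta u\bigr)$, which the equation does not control and which have no sign; the second-order step as you wrote it therefore fails. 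The fix is to contract with $T^{ij}$ (acting either on $\log\lambda_{\max}(\nabla^{2}u)$, as the paper does, or on a $U$-based quantity), so that all fourth-order terms recombine into expressions governed by the differentiated equation and the concave terms $T^{ij,rs}$. The remaining ingredients of your plan -- the vanishing boundary value forcing an interior maximum, the constant-curvature Ricci identities, Newton--MacLaurin and the lower bound $\sum_{i}F^{ii}\geq(C_{n}^{k}/C_{n}^{l})^{1/(k-l)}$ -- do match the paper's scheme.
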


\begin{remark}
\rm{ (1) Obviously, by Theorem \ref{main-th}, one knows that if
$C^0$, $C^1$ estimates could be obtained for the $k$-convex
solutions to (\ref{main-eq}), then the interior $C^2$ estimates
follows directly.
\\
 (2) In the Dirichlet
problem (\ref{main-eq}), if the function $u$  was replaced by a
spacelike graphic function $\widetilde{u}$, the $(0,2)$-type tensor
$\widetilde{U}$ was replaced by the second fundamental form of the
spacelike graphic hypersurface determined by $\widetilde{u}$, and
the zero DBC was replaced by an affine function defined over
$\partial\Omega$, then (\ref{main-eq}) would become a prescribed
curvature problem (PCP for short) recently considered by the
corresponding author, Prof. J. Mao, and his
 collaborators. They successfully obtained the a priori estimates
 for $k$-admissible solutions to the PCP, and, together with the
 method of continuity, showed the existence and uniqueness of
 $2$-admissible solution to the PCP -- see \cite[Theorem 1.4]{ggm} for
 details. Of course, in \cite{ggm}, the $C^2$ interior estimates for the
 $k$-admissible solutions to the PCP are important to get the
 existence. This research experience, together with the continuous study of the
Dirichlet problem (\ref{hte-1}) and its more general version
(\ref{hte-2}) in different settings, is exactly our motivation of
considering the problem (\ref{main-eq}) in this paper.
 \\
 (3) Inspired by the work \cite{lrw}, it should be interesting to
 know whether the Pogorelov type estimates in Theorem \ref{main-th}
 is still valid for $k=l+1$. \\
 (4) Our Theorem \ref{main-th} here and \cite[Theorem 1.1]{ctx}
 somehow reveal the reasonability of considering \textbf{Problem 1}
and
 show the hope
 of possibly solving it. \\
 (5) Inspired by Theorem \ref{main-th} here, it is natural and feasible to try to improve the
 existing
 Pogorelov type estimates for solutions to some prescribed elliptic
 PDEs (with DBC) defined over bounded domains (with boundary) in the Euclidean space
 or Riemannian manifolds to our setting -- investigating elliptic
 PDEs of the same type defined over bounded domains (with boundary)
 in the spacelike hypersurface
 $\mathscr{H}^{n}(1)\subset\mathbb{R}^{n+1}_{1}$. We have already obtained an interesting result and now we prefer to leave this attempt
 to readers who are interested in this topic. \\
  (6) In fact, one can consider the problem (\ref{main-eq}) defined
  over more general bounded domains (with smooth boundary) in Lorentz-Minkowski space
  $\mathbb{R}^{n+1}_{1}$, and similar conclusion can be obtained --
  see Remark \ref{rm3-1} for details. \\
  (7) Since in our previous works \cite{GLM,ggm}, the PCPs therein were considered for spacelike graphic functions defined over bounded domains
  (with boundary) in the hyperbolic plane $\mathscr{H}^{n}(1)\subset\mathbb{R}^{n+1}_{1}$, in order to embody the continuity of our study on geometric elliptic
   PDEs in $\mathbb{R}^{n+1}_{1}$, we insist on giving the
  Pogorelov-type estimates in Theorem \ref{main-th} only for the
  case $\Omega\subset\mathscr{H}^{n}(1)$.
 }
\end{remark}

The paper is organized as follows. In Section \ref{S2}, some useful
formulas (including the structure equations for spacelike
hypersurfaces in $\mathbb{R}^{n+1}_{1}$, some basic properties of
elementary symmetric functions, etc) will be listed. The proof of
Theorem \ref{main-th} will be shown in Section \ref{S3}.

\section{Some useful formulae} \label{S2}

Let $g_{\mathscr{H}^{n}(1)}$ be the Riemannian metric on
$\mathscr{H}^{n}(1)$ induced by the Lorentzian metric $
\langle\cdot,\cdot\rangle_{L}$ of $\mathbb{R}^{n+1}_{1}$. For a
$(s,r)$-tensor field $\alpha$ on $\mathscr{H}^{n}(1)$, its covariant
derivative $\nabla \alpha$ is a $(s,r+1)$-tensor field given by
\begin{eqnarray*}
\begin{split}
&\nabla\alpha(Y^{1},\cdot\cdot\cdot,Y^{s},X_{1},\cdot\cdot\cdot,X_{r},X)\\
=~&\nabla_{X}\alpha(Y^{1},\cdot\cdot\cdot,Y^{s},X_{1},\cdot\cdot\cdot,X_{r})\\
=~&X(\alpha(Y^{1},\cdot\cdot\cdot,Y^{s},X_{1},\cdot\cdot\cdot,X_{r}))-
\alpha(\nabla _{X}Y^{1},\cdot\cdot\cdot,Y^{s},X_{1},\cdot\cdot\cdot,X_{r})\\
&-\cdot\cdot\cdot-\alpha(Y^{1},\cdot\cdot\cdot,Y^{s},X_{1},\cdot\cdot\cdot,\nabla
_{X}X_{r}),
\end{split}
\end{eqnarray*}
and its components in local coordinates
 are denoted by
\begin{eqnarray*}
\alpha^{l_{1}\cdots l_{s}}_{k_{1}\cdots k_{r}, k_{r+1}},
\end{eqnarray*}
where $1\leq l_{i},k_{j}\leq n$ with $i=1,2,\cdots,s$ and
$j=1,2,\cdots,r+1$. Here the comma ``," in subscript of a given
tensor means doing covariant derivatives. Besides, we make an
agreement that, for simplicity, in the sequel the comma ``," in
subscripts will be omitted unless necessary. One can continue to
define the second covariant derivative of $\alpha$ as follows:
\begin{eqnarray*}
\nabla^{2}\alpha(Y^{1},\cdots,Y^{s},X_{1},\cdots,X_{r},X,Y)=
(\nabla_{Y}(\nabla\alpha))(Y^{1},\cdots,Y^{s},X_{1},\cdots,X_{r},X).
\end{eqnarray*}
and then its components in local coordinates
 are denoted by
\begin{eqnarray*}
\alpha^{l_{1}\cdots l_{s}}_{k_{1}\cdots k_{r}, k_{r+1}k_{r+2}},
\end{eqnarray*}
where $1\leq l_{i},k_{j}\leq n$ with $i=1,2,\cdots,s$ and
$j=1,2,\cdots,r+2$. Similarly, the higher order covariant
derivatives of $\alpha$ are given as follows
\begin{eqnarray*}
\nabla^{3}\alpha=\nabla(\nabla^{2}\alpha),
~\nabla^{4}\alpha=\nabla(\nabla^{3}\alpha),\cdots,
\end{eqnarray*}
and so on.

On $\mathscr{H}^{n}(1)$, for any tangent vector fields $X,Y,Z$, the
Riemannian curvature $(1,3)$-tensor $R$ w.r.t.
$g_{\mathscr{H}^{n}(1)}$ is defined as
\begin{eqnarray*}
R(X,Y)Z=-\nabla_{X}\nabla_{Y}Z+\nabla_{Y}\nabla_{X}Z+\nabla_{[X,Y]}Z,
\end{eqnarray*}
where,  as usual, $[\cdot,\cdot]$ denotes the Lie bracket.
 In a local coordinate chart $\{\xi^{i}\}_{i=1}^{n}$ of
$\mathscr{H}^{n}(1)$, the component of the curvature tensor $R$ is
defined by
\begin{eqnarray*}
R\left(\frac{\partial}{\partial\xi^{i}},\frac{\partial}{\partial
\xi^{j}}\right)\frac{\partial}{\partial
\xi^{k}}=R_{kij}^{l}\frac{\partial}{\partial\xi^{l}},
\end{eqnarray*}
where $R_{ijkl}=\sigma_{jm}R_{ikl}^{m}$ and
$\sigma_{lm}:=g_{\mathscr{H}^{n}(1)}\left(\frac{\partial}{\partial
\xi^{l}},\frac{\partial}{\partial \xi^{m}}\right)$. Then, we have
the standard commutation formulas (i.e., Ricci identities)
\begin{eqnarray}\label{Ricci-id}
\qquad \alpha^{l_{1}\cdots l_{s}}_{k_{1}\cdots k_{r}, ji}
-\alpha^{l_{1}\cdots l_{s}}_{k_{1}\cdots k_{r},ij}
=-\sum\limits_{a=1}^{r}R_{ijk_{a}}^{m}\alpha^{l_{1}\cdots
l_{s}}_{k_{1}\cdots k_{a-1}mk_{a+1}\cdots k_{r}}
+\sum\limits_{b=1}^{s}R_{ijm}^{l_{b}}\alpha^{l_{1}\cdots
l_{b-1}ml_{b+1}\cdots l_{s}}_{k_{1}\cdots k_{r}}.
\end{eqnarray}
Denote by $X$ the position vector field of the spacelike
hypersurface $\mathscr{H}^{n}(1)\subset\mathbb{R}^{n+1}_{1}$.
Clearly, for any $x\in \mathscr{H}^{n}(1)$, $X(x)$ is a one-to-one
correspondence w.r.t. $x$. Let $\nu$ be the future-directed timelike
unit normal vector field and $h_{ij}$ be coefficient components of
the second fundamental form of the hypersurface $\mathscr{H}^{n}(1)$
w.r.t. $\nu$, that is,
\begin{eqnarray}\label{sec-fund}
h_{ij}=-\left\langle X_{,ij},\nu\right\rangle_{L}.
\end{eqnarray}
Recall the following identities
\begin{eqnarray}\label{gauss-form}
X_{,ij}=h_{ij}\nu, \qquad {\rm(Gauss~~formula)}
\end{eqnarray}
\begin{eqnarray}\label{weingarten-form}
\nu_{,i}=h_{ij}X^{j}, \qquad {\rm(Weingarten~~formula)}
\end{eqnarray}
where $X^{j}=\sigma^{ij}X_{i}$. Moreover, $\mathscr{H}^{n}(1)$ has
constant sectional curvature\footnote{~This fact will be shown
clearly in Section \ref{S3}.} $-1$ w.r.t. $g_{\mathscr{H}^{n}(1)}$
and satisfies the following Gauss equation
\begin{eqnarray*}
R_{ijkl}=\overline{R}_{ijkl}-\left(h_{ik}h_{jl}-h_{il}h_{jk}\right),\qquad
\qquad 1\leq i,j,k,l\leq n,
\end{eqnarray*}
which together with the fact $\overline{R}=0$ implies
\begin{eqnarray}\label{sectional-cur}
R_{ijkl}=-\left(h_{ik}h_{jl}-h_{il}h_{jk}\right),\qquad \qquad 1\leq
i,j,k,l\leq n,
\end{eqnarray}
where $\overline{R}$ stands for the curvature tensor of
$\mathbb{R}^{n+1}_{1}$. For a brief introduction to the structure
equations of spacelike
 hypersurfaces in $\mathbb{R}^{n+1}_{1}$, one can also check some of
 the corresponding author's previous works, e.g.,
 \cite{gm-1,gm-2,GLM,ggm}. We refer readers to an interesting book \cite{ha}
 about systematical knowledge on
 submanifolds in pseudo-Riemannian geometry.

At the end of this section, we prefer to give some properties of
elementary symmetric functions. Let
$\lambda=(\lambda_{1},\cdot\cdot\cdot,\lambda_{n})\in
\mathbb{R}^{n}$, and then for $1 \leq k \leq n$, the $k$-th
elementary symmetric function of $\lambda$ can be defined as follows
\begin{eqnarray*} \label{sigmak}
\sigma_{k}(\lambda)=\sum\limits_{1\leq
i_{1}<i_{2}<\cdot\cdot\cdot<i_{k}\leq
n}\lambda_{i_{1}}\lambda_{i_{2}}\cdot\cdot\cdot\lambda_{i_{k}}.
\end{eqnarray*}
We also set $\sigma_{0}=1$ and $\sigma_{k}=0$ for $k>n$ or $k<0$.
Recall that the Garding's cone is defined by
\begin{eqnarray} \label{gard-co}
\Gamma_{k}=\{\lambda\in \mathbb{R}^{n}|\sigma_{i}(\lambda)>0,\forall
1\leq i\leq k\}.
\end{eqnarray}
Denote by $\sigma_{k-1}(\lambda|i)=\frac{\partial
\sigma_{k}}{\partial \lambda_{i}}$. By, e.g., \cite{cns2},
\cite[Lemma 2.2.19]{G}, \cite{hs}, \cite[Chapter XV]{L}, one has the
following facts:

\begin{lemma}\label{prop-1}
Let $\lambda=(\lambda_{1},\cdot\cdot\cdot,\lambda_{n})\in \mathbb{R}^{n}$ and $1\leq k \leq n$, then we have\\
$(1)~~\Gamma_{1}\supset\Gamma_{2}\supset\cdot\cdot\cdot\supset\Gamma_{n};$\\
$(2)~~\sigma_{k-1}(\lambda|i)>0 ~for~ \lambda\in\Gamma_{k} ~and~ 1\leq i\leq n;$\\
$(3)~~\sigma_{k}(\lambda)=\sigma_{k}(\lambda|i)+\lambda_{i}\sigma_{k-1}(\lambda|i)~for~ 1\leq i\leq n;$\\
$(4)~~\left[\frac{\sigma_{k}}{\sigma_{l}}\right]^{\frac{1}{k-l}}
are~concave~and~elliptic~in~ \Gamma_{k}
~for~0\leq l<k;$\\
$(5)~~If~\lambda_{1}\geq\lambda_{2}\geq\cdot\cdot\cdot\geq\lambda_{n},~then~\sigma_{k-1}(\lambda|1)
\leq\sigma_{k-1}(\lambda|2)\leq\cdot\cdot\cdot\leq\sigma_{k-1}(\lambda|n)~for~\lambda\in\Gamma_{k};$\\
$(6)~~\sum\limits_{i=1}^{n}\sigma_{k-1}(\lambda|i)=(n-k+1)\sigma_{k-1}(\lambda).$\\
\end{lemma}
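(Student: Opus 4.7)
The six items of the lemma are classical facts about the elementary symmetric functions $\sigma_k$ and the Garding cones $\Gamma_k$; the plan is to deduce them from one another in a logical order, appealing to Garding's theory of hyperbolic polynomials only for the deepest claim.

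First I would establish the two combinatorial identities (3) and (6), which are essentially tautological. The identity in (3) is obtained by splitting the defining sum $\sigma_k(\lambda)=\sum_{i_1<\cdots<i_k}\lambda_{i_1}\cdots\lambda_{i_k}$ according to whether the fixed index $i$ appears among $\{i_1,\ldots,i_k\}$ or not, producing exactly $\sigma_k(\lambda|i)+\lambda_i\sigma_{k-1}(\lambda|i)$, where $\sigma_k(\lambda|i)$ is interpreted as the $k$-th symmetric function of the $n-1$ variables obtained from $\lambda$ by deleting $\lambda_i$. Similarly, (6) follows from a double-counting argument: each monomial $\lambda_{j_1}\cdots\lambda_{j_{k-1}}$ appearing in $\sigma_{k-1}(\lambda)$ contributes to exactly those $\sigma_{k-1}(\lambda|i)$ with $i\notin\{j_1,\ldots,j_{k-1}\}$, of which there are precisely $n-k+1$.

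Next I would prove (2) and (1) by induction on $k$. For (2), one observes that $\sigma_{k-1}(\lambda|i)$ equals the $(k-1)$-th symmetric function of the truncated vector obtained from $\lambda$ by deleting $\lambda_i$, and a standard argument shows that this truncated vector lies in the corresponding Garding cone of $\mathbb{R}^{n-1}$ whenever $\lambda\in\Gamma_k$; positivity then follows inductively from the base case $k=1$. For (1), the Newton--Maclaurin inequalities (valid on $\Gamma_k$) imply $\sigma_{k-1}(\lambda)>0$ on $\Gamma_k$, giving the inclusion chain $\Gamma_{k-1}\supset\Gamma_k$. Claim (5) is then a consequence of the identity
\begin{equation*}
\sigma_{k-1}(\lambda|j)-\sigma_{k-1}(\lambda|i)=(\lambda_i-\lambda_j)\,\sigma_{k-2}(\lambda|ij),
\end{equation*}
which is obtained by applying (3) twice (once to remove $i$, once to remove $j$); combining $\lambda_i\geq\lambda_j$ for $i<j$ with positivity of $\sigma_{k-2}(\lambda|ij)$ (from (2) applied to the doubly-truncated vector, which still lies in the appropriate Garding cone) yields the stated monotonicity.

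The main obstacle is (4). Ellipticity in $\Gamma_k$, meaning positivity of $\partial(\sigma_k/\sigma_l)/\partial\lambda_i$, reduces to (2) together with elementary manipulations using (3), and so is routine once the earlier items are in hand. However, concavity of $(\sigma_k/\sigma_l)^{1/(k-l)}$ on $\Gamma_k$ is a genuine theorem of Garding on hyperbolic polynomials: $\sigma_k$ is hyperbolic with respect to the direction $e=(1,\ldots,1)$ with hyperbolicity cone $\Gamma_k$, and the ratio of two such polynomials raised to the reciprocal of their difference of degrees is concave on $\Gamma_k$. Rather than reproving this deep fact, the plan is to invoke it directly, citing \cite{cns2}, \cite{hs}, and \cite[Lemma 2.2.19]{G} for the detailed derivation; this is the only step among the six that is not essentially combinatorial.
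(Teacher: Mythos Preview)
Your proposal is correct, but it differs from the paper in that the paper does not prove this lemma at all: it merely introduces the six statements with the sentence ``By, e.g., \cite{cns2}, \cite[Lemma 2.2.19]{G}, \cite{hs}, \cite[Chapter XV]{L}, one has the following facts'' and leaves the verification entirely to those references. You instead supply self-contained combinatorial arguments for (3), (5), and (6), an inductive scheme for (1) and (2), and then invoke essentially the same cited sources only for the concavity assertion in (4). Your route is therefore more informative and self-contained, while the paper's is simply the standard economical treatment for a preliminaries section collecting well-known facts. One minor remark: your appeal to Newton--Maclaurin for (1) is unnecessary, since the chain $\Gamma_1\supset\Gamma_2\supset\cdots\supset\Gamma_n$ is immediate from the very definition $\Gamma_k=\{\lambda:\sigma_j(\lambda)>0,\ 1\leq j\leq k\}$.
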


The following Newton-MacLaurin inequality will be used as well (see,
e.g., \cite{mt1,t2}).

\begin{lemma}\label{NM-ieq}
Let $\lambda \in \mathbb{R}^{n}$. For $0\leq l\leq k\leq n,~~r>s\geq
0,~~k\geq r,~~l\geq s$, we have\vspace {0.1 cm}
\begin{eqnarray*}
k(n-l+1)\sigma_{l-1}(\lambda)\sigma_{k}(\lambda)\leq
l(n-k+1)\sigma_{l}(\lambda)\sigma_{k-1}
\end{eqnarray*}
and
\begin{eqnarray*}
\left[\frac{\sigma_{k}(\lambda)/C_{n}^{k}}{\sigma_{l}(\lambda)/C_{n}^{l}}\right]^{\frac{1}{k-l}}
\leq
\left[\frac{\sigma_{r}(\lambda)/C_{n}^{r}}{\sigma_{s}(\lambda)/C_{n}^{s}}\right]^{\frac{1}{r-s}},
\qquad for~~ \lambda\in\Gamma_{k}.
\end{eqnarray*}
\end{lemma}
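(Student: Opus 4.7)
The plan is to establish both inequalities by first proving Newton's basic inequality
\[
p_k(\lambda)^2 \;\geq\; p_{k-1}(\lambda)\, p_{k+1}(\lambda), \qquad 1\leq k\leq n-1,\; \lambda\in\mathbb{R}^n,
\]
for the normalized elementary symmetric functions $p_k := \sigma_k/\binom{n}{k}$, and then deducing the two displayed bounds by algebraic rearrangement and iteration. For the basic inequality I would consider the polynomial $R(t) := \prod_{i=1}^{n}(t+\lambda_i) = \sum_{j=0}^{n}\binom{n}{j}p_j(\lambda)\, t^{n-j}$, which has only real roots since $\lambda\in\mathbb{R}^n$. Applying Rolle's theorem repeatedly both to $R$ and to its reverse polynomial $t^n R(1/t) = \prod_i(1+\lambda_i t)$, one produces after an appropriate number of differentiations a quadratic in $t$ whose coefficients are, up to positive numerical factors, exactly $p_{k-1}$, $p_k$, $p_{k+1}$. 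The requirement that this quadratic have only real roots is the asserted discriminant condition $p_k^2 \geq p_{k-1}p_{k+1}$.

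Next I would reduce the first stated inequality to Newton's inequality. Using the identities $\binom{n}{k-1} = \tfrac{k}{n-k+1}\binom{n}{k}$ and $\binom{n}{l-1} = \tfrac{l}{n-l+1}\binom{n}{l}$, a direct substitution via $\sigma_j = \binom{n}{j} p_j$ yields
\[
k(n-l+1)\,\sigma_{l-1}\sigma_k - l(n-k+1)\,\sigma_l\sigma_{k-1} \;=\; l(n-k+1)\binom{n}{l}\binom{n}{k-1}\bigl(p_{l-1}p_k - p_l p_{k-1}\bigr).
\]
Hence the first inequality is equivalent to $p_l p_{k-1}\geq p_{l-1} p_k$, i.e.\ to the monotonicity of the ratio $p_j/p_{j-1}$ in $j$. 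For $\lambda\in\Gamma_k$ one has $p_j>0$ for $0\leq j\leq k$ (with $p_0=1$), so Newton's inequality can be rewritten as $p_j/p_{j-1}\geq p_{j+1}/p_j$; iterating this from $j=l$ up to $j=k-1$ gives $p_l/p_{l-1}\geq p_k/p_{k-1}$, as required. When $l=0$ both sides of the first inequality vanish and the statement is trivial, so the $l\geq 1$ case suffices.

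Finally, to obtain the Maclaurin-type second inequality, set $q_j := p_j/p_{j-1}$ for $1\leq j\leq k$, which on $\Gamma_k$ is a positive sequence, decreasing by the previous step. Writing
\[
\Bigl(\tfrac{p_k}{p_l}\Bigr)^{1/(k-l)} = (q_{l+1}q_{l+2}\cdots q_k)^{1/(k-l)}, \qquad \Bigl(\tfrac{p_r}{p_s}\Bigr)^{1/(r-s)} = (q_{s+1}q_{s+2}\cdots q_r)^{1/(r-s)},
\]
the claim reduces to $\mathrm{GM}(q_{l+1},\dots,q_k)\leq \mathrm{GM}(q_{s+1},\dots,q_r)$ for geometric means of consecutive blocks. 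Since $\{q_j\}$ is a decreasing positive sequence, the geometric mean over such a block is nonincreasing both when the right endpoint is enlarged (the appended term is no larger than the current mean) and when the left endpoint is shifted to the right (the removed term is no smaller than the current mean). The hypotheses $l\geq s$ and $k\geq r$ then let us pass from $(s,r)$ first to $(l,r)$ and then to $(l,k)$, yielding the desired bound; the normalization factors $\binom{n}{k}, \binom{n}{l}, \binom{n}{r}, \binom{n}{s}$ in the statement are absorbed into the definition of $p_\bullet$. The only genuinely non-routine step is the polynomial/Rolle argument behind Newton's basic inequality; once this is in hand, both stated inequalities are essentially algebraic bookkeeping together with monotonicity of geometric means of a decreasing sequence.
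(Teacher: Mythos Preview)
The paper does not supply its own proof of this lemma; it merely records the Newton--Maclaurin inequalities and cites \cite{mt1,t2}. Your argument is the standard classical one (Newton's inequality via Rolle's theorem applied to $\prod_i(t+\lambda_i)$, followed by iteration on the monotone ratios $q_j=p_j/p_{j-1}$), and it is correct in substance. Two small remarks:

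\begin{itemize}
\item You are right to invoke $\lambda\in\Gamma_k$ when iterating Newton's inequality for the first displayed bound; without positivity of $p_1,\dots,p_k$ the telescoping step $p_j/p_{j-1}\ge p_{j+1}/p_j$ cannot be chained, and indeed the inequality $k(n-l+1)\sigma_{l-1}\sigma_k\le l(n-k+1)\sigma_l\sigma_{k-1}$ can fail for general $\lambda\in\mathbb{R}^n$ once $k\ge l+2$. So your implicit restriction matches the intended hypothesis even though the paper's phrasing is slightly loose.
\item In your final step the intermediate block $\{q_{l+1},\dots,q_r\}$ may be empty, since the hypotheses allow $r\le l$ (e.g.\ $s=0$, $r=1$, $l=2$, $k=3$). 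Routing instead through $(s,k)$, i.e.\ $(s,r)\to(s,k)\to(l,k)$, keeps every block nonempty (because $s<r\le k$ and $l<k$), and your monotonicity-of-geometric-means argument goes through verbatim.
\end{itemize}

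With that one-line rerouting, your proof is complete and is effectively what the cited references contain.
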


For convenience, we introduce the following notations
\begin{eqnarray}\label{not-}
\begin{split}
\quad F(U)=&\left[\frac{\sigma_{k}(U)}{\sigma_{l}(U)}\right]^{\frac{1}{k-l}},\quad T(D^{2}u)=F(\tau\Delta uI-D^{2}u),\\
\quad F^{ij}=\frac{\partial F}{\partial U_{ij}}&,\quad
F^{ij,rs}=\frac{\partial^{2} F}{\partial U_{ij}\partial U_{rs}},
\quad T^{ii}=\tau\sum\limits_{j=1}^{n}F^{jj}-F^{ii}.
\end{split}
\end{eqnarray}
Thus,
\begin{eqnarray*}
F^{ii}=\frac{1}{k-l}\left(\frac{\sigma_{k}(U)}{\sigma_{l}(U)}\right)^{\frac{1}{k-l}-1}
\frac{\sigma_{k-1}(U|i)\sigma_{l}(U)-\sigma_{k}(U)\sigma_{l-1}(U|i)}{\sigma_{l}^{2}(U)}.
\end{eqnarray*}
\vspace {0.1 cm}

To handle the ellipticity of the equation \eqref{main-eq}, we need
the following important proposition and its proof is almost the same
as that of \cite[Proposition 2.2.3]{C}.
\begin{lemma}\label{Ellip-}
Let $\lambda(U[u])\in \Gamma_{k}$ and $0\leq l\leq k-1$. Then the
operator
\begin{eqnarray}
F(U(u))=\left(\frac{\sigma_{k}(\lambda(U))}{\sigma_{l}(\lambda(U))}\right)^{\frac{1}{k-l}}
\end{eqnarray}
is elliptic and concave with respect to $U(u)$. Moreover we have
\begin{eqnarray}
\sum\limits_{i=1}^{n}F^{ii}\geq\left(\frac{C_{n}^{k}}{C_{n}^{l}}\right)^{\frac{1}{k-l}}.
\end{eqnarray}
\end{lemma}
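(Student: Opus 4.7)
The first assertion (ellipticity and concavity of $F$ as a function of $U$) is a direct invocation of Lemma~\ref{prop-1}(4): the hypothesis $\lambda(U[u])\in\Gamma_k$ places us precisely in the domain where $(\sigma_k/\sigma_l)^{1/(k-l)}$ is known to be elliptic and concave, so in particular each $F^{ii}>0$ and the Hessian $(F^{ij,rs})$ is negative semidefinite. Nothing new is needed here beyond citing Lemma~\ref{prop-1}(4).

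For the lower bound on $\sum_i F^{ii}$, I would proceed by direct computation in the spirit of \cite[Proposition~2.2.3]{C}. Starting from the expression of $F^{ii}$ displayed in \eqref{not-} and summing over $i$ using Lemma~\ref{prop-1}(6), one gets
$$\sum_{i=1}^{n}F^{ii}=\frac{1}{k-l}\left(\frac{\sigma_k}{\sigma_l}\right)^{\frac{1}{k-l}-1}\frac{(n-k+1)\sigma_{k-1}\sigma_l-(n-l+1)\sigma_k\sigma_{l-1}}{\sigma_l^{2}}.$$
The next step is to estimate the numerator from below by the first Newton--MacLaurin inequality of Lemma~\ref{NM-ieq}, $k(n-l+1)\sigma_{l-1}\sigma_k\leq l(n-k+1)\sigma_l\sigma_{k-1}$, which yields
$$(n-k+1)\sigma_{k-1}\sigma_l-(n-l+1)\sigma_k\sigma_{l-1}\geq \frac{(k-l)(n-k+1)}{k}\sigma_{k-1}\sigma_l,$$
so the problem reduces to bounding $\sigma_{k-1}/\sigma_l$ from below in terms of $\sigma_k/\sigma_l$.

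For this I would apply the second Newton--MacLaurin inequality of Lemma~\ref{NM-ieq} with $(r,s)=(k-1,l)$, which is legitimate precisely because $k\geq l+2$, and obtain
$$\frac{\sigma_{k-1}}{\sigma_l}\geq \frac{C_n^{k-1}}{C_n^l}\left(\frac{\sigma_k/C_n^k}{\sigma_l/C_n^l}\right)^{(k-1-l)/(k-l)}.$$
Plugging this back, the exponents of $\sigma_k/\sigma_l$ combine to $\frac{1-(k-l)}{k-l}+\frac{k-1-l}{k-l}=0$, so the $\sigma$'s cancel cleanly, and using $C_n^k=\tfrac{n-k+1}{k}C_n^{k-1}$ produces the required constant $(C_n^k/C_n^l)^{1/(k-l)}$.

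The degenerate case $l=0$ must be treated separately, but is actually easier: $\sigma_{-1}=0$ so the second term in the numerator disappears, and the classical bound $\sigma_{k-1}\geq C_n^{k-1}(\sigma_k/C_n^k)^{(k-1)/k}$ closes the argument with the same final constant $(C_n^k)^{1/k}$. I do not expect a genuine obstacle in this proof; the only delicate point is the bookkeeping of exponents and binomial coefficients when chaining the two Newton--MacLaurin inequalities, and the algebra closes exactly because the hypothesis $k\geq l+2$ gives the cancellation of $\sigma_k/\sigma_l$.
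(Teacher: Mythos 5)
Your proof is correct, and the bookkeeping closes exactly as you claim: summing the formula for $F^{ii}$ from \eqref{not-} via Lemma \ref{prop-1}(6), absorbing the $\sigma_k\sigma_{l-1}$ term with the first Newton--MacLaurin inequality of Lemma \ref{NM-ieq}, and then eliminating $\sigma_{k-1}/\sigma_l$ with the second one for $(r,s)=(k-1,l)$ gives exponent $0$ on $\sigma_k/\sigma_l$ and, using $(n-k+1)C_n^{k-1}=kC_n^{k}$, the constant $(C_n^k/C_n^l)^{1/(k-l)}$. Be aware that the paper itself gives no proof: it only points to \cite[Proposition 2.2.3]{C} (an unpublished preprint), so your computation is the argument that citation stands in for -- it is the standard route, not a genuinely different one. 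One small point to tidy: the lemma as stated assumes only $0\le l\le k-1$, whereas your appeal to the second Newton--MacLaurin inequality requires $r=k-1>s=l$, i.e.\ $k\ge l+2$ (true in the paper's equation \eqref{main-eq}, but not in the lemma's hypothesis). The leftover case $k=l+1$ needs no second inequality: there $\sigma_{k-1}=\sigma_l$ and the exponent $\tfrac{1}{k-l}-1$ vanishes, so your first step already gives $\sum_i F^{ii}\ge \tfrac{n-k+1}{k}=C_n^k/C_n^{k-1}$. Likewise your separate treatment of $l=0$ is fine but not really needed, since $\sigma_{-1}\equiv 0$ makes the first inequality an identity and the rest of the argument goes through verbatim (for $k\ge 2$; $k=1$, $l=0$ is trivial since $\sum_iF^{ii}=n$).
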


We also need the following well-known result.

\begin{lemma}\label{F-Lamb}
Let $U$ be a diagonal matrix with $\lambda(U[u])\in \Gamma_{k}$,
$0\leq l\leq k-1$ and $k\geq3.$ Then
\begin{eqnarray}\label{FL-1}
-F^{1i,i1}(U)=\frac{F^{11}-F^{ii}}{U_{ii}-U_{11}}
\end{eqnarray}
for $i\geq2$. Moreover, if $U_{11}\geq U_{22}\geq\cdot\cdot\cdot\geq
U_{nn}$, we have
\begin{eqnarray}\label{FL-2}
F^{11}\leq F^{22}\leq \cdot\cdot\cdot\leq F^{nn}.
\end{eqnarray}
\end{lemma}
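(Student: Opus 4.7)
The plan is to handle the identity \eqref{FL-1} and the monotonicity \eqref{FL-2} separately, both as consequences of the classical spectral calculus applied to $F(U)=\widetilde F(\lambda(U))$, where $\widetilde F(\lambda)=(\sigma_k(\lambda)/\sigma_l(\lambda))^{1/(k-l)}$ is smooth and symmetric on $\Gamma_k$.

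For \eqref{FL-1}, I would invoke the standard formula for the Hessian of a spectral function at a diagonal matrix $U=\mathrm{diag}(\lambda_1,\ldots,\lambda_n)$ with pairwise distinct eigenvalues,
\[
F^{ij,ji}(U)\;=\;\frac{\widetilde F_i(\lambda)-\widetilde F_j(\lambda)}{\lambda_i-\lambda_j},\qquad i\neq j,
\]
where $\widetilde F_p=\partial\widetilde F/\partial\lambda_p$. Since $\widetilde F_p(\lambda)=F^{pp}(U)$ and $\lambda_p=U_{pp}$ at a diagonal matrix, and the transpose symmetry gives $F^{i1,1i}(U)=F^{1i,i1}(U)$, taking $j=1$ and multiplying by $-1$ yields exactly \eqref{FL-1}; the degenerate case $\lambda_i=\lambda_1$ is recovered by continuity. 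For a self-contained derivation, one can polarize along the one-parameter family $U(t)=U+t(E_{1i}+E_{i1})$, whose restriction to the $(1,i)$-block diagonalizes explicitly, and read off the Taylor coefficient of $\widetilde F\circ\lambda$ at $t=0$.

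For \eqref{FL-2}, I would combine the concavity of $\widetilde F$ on $\Gamma_k$ (Lemma \ref{prop-1}(4) and Lemma \ref{Ellip-}) with its full permutation symmetry. Fix $p<q$ with $U_{pp}\geq U_{qq}$, let $\lambda'$ denote the vector obtained from $\lambda$ by swapping its $p$-th and $q$-th entries, and set $g(t):=\widetilde F((1-t)\lambda+t\lambda')$ for $t\in[0,1]$. Then $g$ is concave on $[0,1]$ with $g(0)=g(1)$, so it lies above the horizontal chord joining its endpoints, which forces $g'(0)\geq 0$. Computing directly,
\[
g'(0)\;=\;\bigl(\widetilde F_q(\lambda)-\widetilde F_p(\lambda)\bigr)(\lambda_p-\lambda_q)\;=\;(F^{qq}-F^{pp})(U_{pp}-U_{qq}),
\]
so $F^{pp}\leq F^{qq}$ whenever $U_{pp}\geq U_{qq}$ (with the equality case immediate by symmetry). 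Applying this to consecutive pairs $(p,q)=(i,i+1)$ yields the full chain in \eqref{FL-2}.

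The hard part will be the spectral Hessian formula that powers Step 1: although classical, it requires careful bookkeeping of the convention distinguishing $U_{pq}$ from $U_{qp}$, so that the coefficient $F^{1i,i1}$ is identified correctly and the sign in \eqref{FL-1} comes out as stated. Once that identity is in hand, both parts of the lemma follow quickly; in particular, the concavity-plus-symmetry argument for Step 2 sidesteps any explicit manipulation of $\sigma_k$, $\sigma_l$, or their cofactors.
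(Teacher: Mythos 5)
Your proposal is correct, but note that the paper does not actually prove this lemma: it simply cites \cite[Proposition 2.3]{ctx} for \eqref{FL-1} and \cite[Lemma 2.2]{A} for \eqref{FL-2} and omits the details, so you are supplying the argument those citations stand in for. For \eqref{FL-1} you use the same classical ingredient as the cited sources, namely the second-derivative formula for a spectral function $F(U)=\widetilde F(\lambda(U))$ at a diagonal matrix, $F^{ij,ji}=(\widetilde F_i-\widetilde F_j)/(\lambda_i-\lambda_j)$ for $i\neq j$; your remark about the $U_{pq}$ versus $U_{qp}$ bookkeeping is exactly the right point to be careful about, and the degenerate case $U_{ii}=U_{11}$ can indeed only be understood as a limit (the identity as stated is otherwise ill-posed, which is also how it is used downstream). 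For \eqref{FL-2} your swap-and-concavity argument is a clean, self-contained alternative to invoking Andrews' lemma: it needs only that $\Gamma_k$ is convex and permutation-invariant (both true), that $\widetilde F=(\sigma_k/\sigma_l)^{1/(k-l)}$ is symmetric, and its concavity on $\Gamma_k$ from Lemma \ref{prop-1}(4); the computation $g'(0)=(\widetilde F_q-\widetilde F_p)(\lambda_p-\lambda_q)\geq 0$ is right and gives the full chain by applying it to consecutive pairs, with equality of derivatives at equal eigenvalues following from symmetry. What your route buys is independence from the explicit cofactor manipulations of $\sigma_k$ and $\sigma_l$ (and from the external references); what the paper's route buys is brevity, since both facts are standard in the literature it draws on.
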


\begin{proof}
See \cite[Proposition 2.3]{ctx} for the proof of \eqref{FL-1}. The
proof of \eqref{FL-2} is almost the same with that of \cite[Lemma
2.2]{A} and we prefer to omit here.
\end{proof}

\section{Proof of Theorem \ref{main-th}} \label{S3}

In this section, we will use an idea (which is similar to that in
\cite{ctx,cj}) to give the proof of Theorem \ref{main-th}.

Assume that $u\in C^{4}(\Omega)\cap C^{2}(\overline{\Omega})$ is a
solution of the problem \eqref{main-eq} with $\lambda(U)\in
\Gamma_{k}$. Without loss of generality, we assume $u<0$ in $\Omega$
by the maximum principle.

Assume that a point on $\mathscr{H}^{n}(1)$ is described by local
coordinates $\xi^{1},\ldots,\xi^{n}$, that is,
$x=x(\xi^{1},\ldots,\xi^{n})$. For convenience, let
$\partial_{i}:=\partial/\partial\xi^{i}$ be the corresponding
coordinate fields on $\mathscr{H}^{n}(1)$ and then
$\sigma_{ij}=g_{\mathscr{H}^{n}(1)}(\partial_i,\partial_j)$ be the
Riemannian metric on $\mathscr{H}^{n}(1)$. For spacelike graphic
hypersurfaces
$\mathcal{G}:=\{(x,\widetilde{u}(x))|x\in\mathscr{H}^{n}(1)\}$ in
$\mathbb{R}^{n+1}_{1}$, one knows that its induced metric from the
Lorentzian metric $\langle\cdot,\cdot\rangle_{L}$ of
$\mathbb{R}^{n+1}_{1}$ has the form
$g:=u^{2}g_{\mathscr{H}^{n}(1)}-dr^{2}$. By \cite[Lemma 3.1]{gm-1}
(see also \cite{gm-2,GLM,ggm}), it is easy to know that for
$\mathcal{G}$, the tangent vectors, the future-directed timelike
unit normal vector and the second fundamental form are given by
\begin{eqnarray*}
X_{i}=\partial_{i}+\widetilde{u}_i\partial_{r}, \qquad
i=1,2,\cdots,n,
\end{eqnarray*}
\begin{eqnarray*}
\nu=\frac{1}{v}\left(\partial_r+\frac{1}{\widetilde{u}^2}\widetilde{u}^j\partial_j\right),
\end{eqnarray*}
with $\widetilde{u}^{j}:=\sigma^{ij}\widetilde{u}_{i}$,
$v:=\sqrt{1-\widetilde{u}^{-2}|\nabla \widetilde{u}|^2}$, and
\begin{eqnarray*}
h_{ij}=-\frac{1}{v}\left(\frac{2}{\widetilde{u}}{\widetilde{u}_i
\widetilde{u}_j }-\widetilde{u}_{ij} -\widetilde{u}
\sigma_{ij}\right),
\end{eqnarray*}
Since $\mathscr{H}^{n}(1)$ can be seen as the special case of
$\mathcal{G}$ with $\widetilde{u}\equiv1$, one has
\begin{eqnarray*}
\nu=\partial_{r},\qquad h_{ij}=\sigma_{ij},
\end{eqnarray*}
 which implies further the Gauss equation (\ref{sectional-cur}) has
 the following form
\begin{eqnarray*}
R_{ijkl}=-(\sigma_{ik}\sigma_{jl}-\sigma_{il}\sigma_{jk}),\qquad
\qquad 1\leq i,j,k,l\leq n
\end{eqnarray*}
for the spacelike hypersurface $\mathscr{H}^{n}(1)$. Then by Schur's
theorem (see, e.g., \cite[Chapter 4]{cl}),  one knows:\footnote{~We
prefer to mention readers that the computation
$R_{ijkl}=\sigma_{jm}R_{ikl}^{m}$ (for components of curvature
tensor) used in Section \ref{S2} has an opposite sign with the
corresponding one used in \cite[Chapter 4]{cl}. However, there is no
essential difference between two settings -- when using them to
calculate sectional (or Ricci, scalar) curvatures, they coincide
with each other. The reason why one meets two settings for
$R_{ijkl}$ is that opposite orientations have been chosen for the
unit normal vector when computing components of the second
fundamental form.}
\begin{itemize}
\item $\mathscr{H}^{n}(1)$ has constant sectional curvature $-1$.
\end{itemize}
Obviously, when $\mathcal{G}$ degenerates into $\mathscr{H}^{n}(1)$,
its metric $g$ becomes $g_{\mathscr{H}^{n}(1)}$ directly. Now, we
would like to give more information about the induced metric
$g_{\mathscr{H}^{n}(1)}$ of
$\mathscr{H}^{n}(1)\subset\mathbb{R}^{n+1}_{1}$ with components
$\sigma_{ij}=g_{\mathscr{H}^{n}(1)}(\partial_i,\partial_j)$ for any
$1\leq i,j,k,l\leq n$. As we know, for any point on
$\mathscr{H}^{n}(1)\subset\mathbb{R}^{n+1}_{1}$, its global
Lorentz-Minkowski coordinates $(x^{1},x^{2},\cdots,x^{n+1})$ can be
reparameterized as follows
 \begin{equation} \label{def-33}
 \left\{
\begin{aligned}
&x^{1}=\cos\xi^{1}\cos\xi^{2}\dots\cos\xi^{n-1}\sinh\xi^{n}\\
&x^{2}=\cos\xi^{1}\cos\xi^{2}\dots\sin\xi^{n-1}\sinh\xi^{n}\\
&\quad\dots~\dots\\
&x^{n-1}=\cos\xi^{1}\sin\xi^{2}\sinh\xi^{n}\\
&x^{n}=\sin\xi^{1}\sinh\xi^{n}\\
&x^{n+1}=\cosh\xi^{n},
\end{aligned}
\right.
\end{equation}
which implies
\begin{eqnarray*}
&&g_{\mathscr{H}^{n}(1)}=\sinh^{2}\xi^{n}(d\xi^{1})^{2}+\sinh^{2}\xi^{n}\cos^{2}\xi^{1}(d\xi^{2})^{2}
+\cdots\\
&&\qquad\qquad\qquad+\sinh^{2}\xi^{n}\cos^{2}\xi^{1}\dots\cos^{2}\xi^{n-2}(d\xi^{n-1})^{2}+(d\xi^{n})^{2},
\end{eqnarray*}
and then
\begin{eqnarray*}
&&\sigma_{ij}=\delta_{ij}\sinh^{2}\xi^{n}\cos^{2}\xi^{1}\dots\cos^{2}\xi^{i-1},
\\
&&\sigma_{11}=\sinh^{2}\xi^{n},\quad\sigma_{nn}=1,\quad\sigma_{1i}=\sigma_{i1}=\sigma_{ni}=\sigma_{in}=0,
\end{eqnarray*}
for any $2\leq i,j\leq n-1$, with the inverse
\begin{eqnarray*}
&&\sigma^{ij}=\delta^{ij}\sinh^{-2}\xi^{n}\cos^{-2}\xi^{1}\dots\cos^{-2}\xi^{i-1},
\\
&&\sigma^{11}=\sinh^{-2}\xi^{n},\quad\sigma^{nn}=1,\quad\sigma^{1i}=\sigma^{i1}=\sigma^{ni}=\sigma^{in}=0.
\end{eqnarray*}
Choose a fixed point $q\in\mathscr{H}^{n}(1)$ which is outside the
bounded domain $\Omega\subset\mathscr{H}^{n}(1)$, and define a
function $\rho$ on $\Omega$ as follows
\begin{eqnarray} \label{nf}
\rho:=\rho(q,x), \quad \forall x\in\Omega,
\end{eqnarray}
where $\rho(q,x)$ measures the Riemannian distance between $q$ and
$x$. Let $\gamma(t)$ be the minimizing geodesic connecting $q$ and
$x$, with $\gamma(0)=q$, $\gamma(\rho)=x$. Let
$\{\frac{d\gamma(t)}{dt}|_x, Y_{1}, \dots, Y_{n-1}\}$ be an
orthonormal basis of the tangent space
$T_{\gamma(\rho)}\mathscr{H}^{n}(1)$. Parallel transport vectors
$Y_{1}, \dots, Y_{n-1}$ along the geodesic $\gamma(t)$ yields
orthogonal vector fields $\widetilde{Y}_{i}$, $i=1,2,\cdots,n-1$.
Clearly, each $\widetilde{Y}_{i}$ is the Jaccobi field along
$\gamma(t)$ satisfying $\widetilde{Y}_{i}(\rho)=Y_{i}$. Since
$\mathscr{H}^{n}(1)$ has constant sectional curvature $-1$, one has
\begin{eqnarray*}
\widetilde{Y}_{i}(t)=f(t)Y_{i}(t)
\end{eqnarray*}
with $f(t)=\frac{1}{\sinh\rho}\sinh t $, and then
\begin{eqnarray*}
\begin{split}
\Delta\rho&=\sum\limits_{i=1}^{n-1}\mathrm{Hess}(\rho)(Y_{i},Y_{i})\\
&=\sum\limits_{i=1}^{n-1}\int_{0}^{\rho}\left(|\nabla_{\frac{\partial}{\partial t}}\widetilde{Y}_{i}|^{2}+
\langle \widetilde{Y}_{i},\nabla_{\frac{\partial}{\partial t}}\nabla_{\frac{\partial}{\partial t}}\widetilde{Y}_{i}\rangle_{g_{\mathscr{H}^{n}(1)}}\right)dt\\
&=(n-1)\int_{0}^{\rho}\left(\left|\frac{df(t)}{dt}\right|^{2}+f^{2}(t)\right)dt\\
&=(n-1)\int_{0}^{\rho}\frac{1}{\sinh^{2}\rho}(\sinh^{2}t+\cosh^{2}t)dt\\
&=(n-1)\coth\rho,
\end{split}
\end{eqnarray*}
 where $\mathrm{Hess}$ is the Hessian operator on
 $\mathscr{H}^{n}(1)$, and
 $\langle\cdot,\cdot\rangle_{g_{\mathscr{H}^{n}(1)}}$ is the inner
 product w.r.t. the metric $g_{\mathscr{H}^{n}(1)}$. Since $\Omega$
 is bounded and complete, from the definition (\ref{nf}), it is easy
 to know that $\rho=\rho(q,x)$ has infimum $c^{-}>0$ and supremum
 $c^{+}$ simultaneously. Therefore, we have
 \begin{eqnarray} \label{nd-2}
 (n-1)\coth(c^{+})\leq\Delta\rho(q,x)\leq(n-1)\coth(c^{-}), \qquad
 \forall x\in\Omega.
 \end{eqnarray}
On $\Omega$, consider the following test function
\begin{eqnarray*}
\widetilde{P}(x)=\beta
\log(-u)+\log\lambda_{\max}(x)+\frac{a}{2}|\nabla u|^{2}+A\cdot
\rho(q,x),
\end{eqnarray*}
where $\rho(q,x)$ is defined as (\ref{nf}), $\lambda_{\max}(x)$ is
the biggest eigenvalue of the Hessian matrix $u_{ij}$, and $\beta$,
$a$, $A$ are positive constants which will be determined later.
Suppose that $\widetilde{P}$ attains its maximum value in $\Omega$
at $x_{0}$. Choosing a local orthonormal frame field at $x_{0}$ such
that $g_{ij}(x_{0})=\delta_{ij}(x_{0})$. This can always be assured
-- e.g., choosing local coordinates
$\xi^{1},\xi^{2},\cdots,\xi^{n-1},\sinh^{-1}\xi^{n}$ around $x_{0}$,
where $\xi^{i}$, $i=1,2,\cdots,n$ are determined by (\ref{def-33}).
Rotating further the coordinate axes, we can diagonal the matrix
$\nabla^{2}u=(u_{ij})$,
\begin{eqnarray*}
u_{ij}(x_{0})=u_{ii}(x_{0})\delta_{ij},\quad u_{11}(x_{0})\geq
u_{22}(x_{0})\geq\cdot\cdot\cdot \geq u_{nn}(x_{0}),
\end{eqnarray*}
and then
\begin{eqnarray*}
U_{11}(x_{0})\leq U_{22}(x_{0})\leq\cdots\leq U_{nn}(x_{0}).
\end{eqnarray*}
So, by \eqref{FL-2} we can obtain
\begin{eqnarray*}\label{ineq-1}
F^{11}(x_{0})\geq F^{22}(x_{0})\geq\cdots\geq F^{nn}(x_{0})>0,
\end{eqnarray*}
\begin{eqnarray*}\label{ineq-2}
0<T^{11}(x_{0})\leq T^{22}(x_{0})\leq\cdots\leq T^{nn}(x_{0}).
\end{eqnarray*}

On $\Omega$, define a new function
\begin{eqnarray*}
P(x)=\beta \log(-u)+\log u_{11}(x)+\frac{a}{2}|\nabla u|^{2}+A\cdot
\rho(q,x),
\end{eqnarray*}
which also attains a local maximum at $x_{0}$. Differentiating $P$
at $x_{0}$ once yields
\begin{eqnarray*}\label{dif-1}
\frac{\beta u_{i}}{u}+\frac{u_{11i}}{u_{11}}+au_{ii}u_{i}+A\cdot
\rho_{i}(q,x)=0,
\end{eqnarray*}
and differentiating $P$ at $x_{0}$ twice results in
\begin{eqnarray*}\label{dif-2}
\frac{\beta u_{ii}}{u}-\frac{\beta
u_{i}^{2}}{u^{2}}+\frac{u_{11ii}}{u_{11}}
-\frac{u_{11i}^{2}}{u_{11}^{2}}+a\sum\limits_{p=1}^{n}u_{p}u_{pii}+au_{ii}^{2}
+A\cdot \rho_{ii}(q,x)\leq0.
\end{eqnarray*}
Thus, at $x_{0}$,
\begin{eqnarray}\label{main-ieq}
\begin{split}
0\geq &~T^{ii}P_{ii}\\
= &~\frac{T^{ii}\beta u_{ii}}{u}-\frac{\beta
T^{ii}u_{i}^{2}}{u^{2}}+\frac{T^{ii}u_{11ii}}{u_{11}}
-\frac{T^{ii}u_{11i}^{2}}{u_{11}^{2}}\\
&+a\sum\limits_{p=1}^{n}u_{p}T^{ii}u_{pii}+aT^{ii}u_{ii}^{2}
+AT^{ii}\rho_{ii}\\
\geq &~\frac{T^{ii}\beta u_{ii}}{u}-\frac{\beta
T^{ii}u_{i}^{2}}{u^{2}}+\frac{T^{ii}u_{11ii}}{u_{11}}
-\frac{T^{ii}u_{11i}^{2}}{u_{11}^{2}}\\
&+a\sum\limits_{p=1}^{n}u_{p}T^{ii}u_{pii}+aT^{ii}u_{ii}^{2}
+AT^{11}\Delta\rho\\
\geq &~\frac{T^{ii}\beta u_{ii}}{u}-\frac{\beta
T^{ii}u_{i}^{2}}{u^{2}}+\frac{T^{ii}u_{11ii}}{u_{11}}
-\frac{T^{ii}u_{11i}^{2}}{u_{11}^{2}}\\
&+a\sum\limits_{p=1}^{n}u_{p}T^{ii}u_{pii}+aT^{ii}u_{ii}^{2}
+AT^{11} (n-1)\coth(c^{+}),
\end{split}
\end{eqnarray}
 where the Einstein summation convention has been used --  repeated superscripts and subscripts should be
made summation. The second inequality in (\ref{main-ieq}) holds
since $\Delta\rho$ is globally defined and is independent of the
choice of local coordinates. The last inequality in (\ref{main-ieq})
holds because of the estimate (\ref{nd-2}).
 If one goes back to the proof of \cite[Theorem
1.1]{ctx}, he or she would find that for calculations done at a
point, the above inequality has \emph{nearly same form} with (3.5)
of \cite{ctx} except the last term in the RHS -- the last term in
the RHS of (\ref{main-ieq}) is $AT^{11} (n-1)\coth(c^{+})$ while the
one in (3.5) of \cite{ctx} is $A\sum_{i=1}^{n}T^{ii}$. However, the
main role of $A$ in the evaluation of each term in (3.5) of
\cite{ctx} is to absorb those \emph{negative} terms appearing.
Since, in our setting here, $T^{ii}(x_0)$ can be controlled at $x_0$
and $(n-1)\coth(c^{+})$ is a constant (which will not break the role
of $A$), with the help of properties of $\sigma_{k}$-operator and
structure equations listed in Section \ref{S2}, our Pogorelov type
estimates in Theorem \ref{main-th} follows by using a similar
argument (with necessary modifications\footnote{~Of course, our
positive constants $\beta$, $a$ and $A$ here might be different from
those in the function $\widetilde{P}$ constructed in the proof of
\cite[Theorem 1.1]{ctx}, since constants $n$ and $c^{+}$ appear.})
to the rest part of the proof of \cite[Theorem 1.1]{ctx}.

Of course, in the evaluating process of terms in the RHS of
(\ref{main-ieq}), the identity
\begin{eqnarray*}
\begin{split}
u_{i11}&=u_{11i}+u_{j}R^{j}_{1i1}\\
&=u_{11i}+u^{m}(\delta_{11}\delta_{im}-\delta_{1m}\delta_{i1})
\end{split}
\end{eqnarray*}
with $u^{m}=u_{j}\delta^{jm}$, which was obtained by
\eqref{Ricci-id} and \eqref{sectional-cur}, would be used. However,
the curvature term $R^{j}_{1i1}$ will not bring us any trouble,
since its boundedness over the bounded domain $\Omega$ is enough
(let alone it is a constant in our setting).

\vspace{3mm}

\begin{remark} \label{rm3-1}
\rm{ (1) Let $\mathcal{M}$ be an immersed complete spacelike
hypersurface in $\mathbb{R}^{n+1}_{1}$ and assume that
$\Omega\subset\mathcal{M}$ is a bounded domain (with smooth boundary
$\partial\Omega$). If furthermore $\mathcal{M}$ is simply connected
and satisfies
 \begin{eqnarray*}
 \mathrm{Ric}(\mathcal{M})\geq -(n-1)K \qquad\mathrm{and}\qquad
 \mathrm{Sec}(\mathcal{M})\leq0,
 \end{eqnarray*}
where $K\geq0$ is a nonnegative constant, and $
\mathrm{Ric}(\mathcal{M})$, $\mathrm{Sec}(\mathcal{M})$ denote the
Ricci curvature and the sectional curvature of $\mathcal{M}$
respectively, then by Hessian comparison theorem and Laplace
comparison theorem (see, e.g., \cite{ys}), one has
\begin{eqnarray*}
 \frac{n-1}{\rho} \leq \Delta\rho \leq
 \frac{n-1}{\rho}\sqrt{K}\rho\coth(\sqrt{K}\rho)\leq\frac{n-1}{\rho}\left(1+\sqrt{K}\rho\right).
\end{eqnarray*}
 Together with the fact $0<c^{-}\leq\rho\leq c^{+}$, it is easy to
 know that
  \begin{eqnarray*}
0<\frac{n-1}{c^{+}}\leq\Delta\rho\leq \sqrt{K}+\frac{n-1}{c^{-}}.
  \end{eqnarray*}
Inspired by this observation and the construction of auxiliary
functions in our proof of Theorem \ref{main-th}, it is not hard to
know that our Pogorelov type estimates in Theorem \ref{main-th}
would still be valid if the domain $\Omega\subset\mathscr{H}^{n}(1)$
in the problem (\ref{main-eq}) was replaced by
$\Omega\subset\mathcal{M}$ mentioned as above. Of course, in this
setting, $\mathcal{M}$ must be noncompact since $\mathcal{M}$ is
simply connected and $\mathrm{Sec}(\mathcal{M})\leq0$ (using
Cartan-Hadamard theorem directly, see, e.g., \cite[Chapter 5]{cl}).
\\
 (2) One might see that Dai's main conclusion in \cite{qd} can be improved
to bounded domains (with smooth boundary) of simply connected
complete Riemannian manifolds whose sectional curvature is
non-positive and whose Ricci curvature is bounded from below by
non-positive constant.  What about bounded domains on complete
manifolds with positive curvature? If one checks our proof here
carefully, one might see that the key point is how to find a
strictly positive lower bound for $\Delta\rho$. Based on this
reason, maybe complete manifolds with \emph{suitable pinching
assumption for positive curvature} could be expected to get similar
conclusion as well. Readers who have interest can try this
improvement. So far, we do not have this interest to finish this,
since what we are caring about is the Lorentz-Minkowski situation.
 }
\end{remark}

\section*{Acknowledgments}
This work is partially supported by the NSF of China (Grant Nos.
11801496 and 11926352), the Fok Ying-Tung Education Foundation
(China) and  Hubei Key Laboratory of Applied Mathematics (Hubei
University). The authors would like to thank Dr. Ya Gao and Ms. Qing
Dai for useful discussions during the preparation of this paper.


\begin{thebibliography}{50}
\setlength{\itemsep}{-0pt} \small

\bibitem{ha} H. Anciaux, \emph{Minimal submanifolds in Pseudo-Riemannian
Geometry}, World Scientific, 2011.

\bibitem{A} B. Andrews, \emph{Contraction of convex hypersurfaces in Euclidean space}, Calc. Var. Partial
Differential Equations {\bf 2} (1994) 151--171.


\bibitem{cns1} L. Caffarelli, L. Nirenberg, J. Spruck,
 \emph{Dirichlet problem for nonlinear second order elliptic equations I, Monge-Amp\`{e}re equations}, Commun. Pure Appl. Math. {\bf37} (1984) 369--402.


\bibitem{cns2} L. Caffarelli, L. Nirenberg, J. Spruck, \emph{The Dirichlet problem for nonlinear second order elliptic equations, III:
functions of the eigenvalues of the Hessian}, Acta Math. {\bf 155}
(1985) 261--301.

\bibitem{C} C. Q. Chen, \emph{On the elementary symmetric functions}, preprint.


\bibitem{cl} W. H. Chen, X. X. Li, \emph{An Introduction to Riemannian
Geometry} (in Chinese), Peking University Press, 2002.


\bibitem{ctx} L. Chen, Q. Tu, N. Xiang, \emph{Pogorelov type estimates for a class of Hessian quotient
equations}, J. Differ. Equat. {\bf 282} (2021) 272--284.

\bibitem{cw1} K. S. Chou, X. J. Wang, \emph{A variational theory of the Hessian equation}, Commun. Pure Appl. Math. {\bf54}
(2001) 1029--1064.


\bibitem{cj} J. C. Chu, H. M. Jiao, \emph{Curvature estimates for a class of Hessian type equations},
Calc. Var. Partial Differential Equations {\bf 60}, Article No: 90
(2021). https://doi.org/10.1007/s00526-021-01930-w


\bibitem{qd} Q. Dai, \emph{Pogorelov-type estimates for a class of fully nonlinear elliptic equations} (in Chinese), Master's Thesis, Hubei University, 2023.


\bibitem{fww1} J. X. Fu, Z. Z. Wang, D. M. Wu, \emph{Form-type Calabi-Yau equations}, Math. Res. Lett. {\bf17}(5) (2010) 887--903.

\bibitem{fww2} J. X. Fu, Z. Z. Wang, D. M. Wu, \emph{Form-type equations on K\"{a}hler manifolds
of nonnegative orthogonal bisectional curvature}, Calc. Var. Partial
Differential Equations {\bf52} (2015) 327--344.


\bibitem{gm-1}  Y. Gao, J. Mao, \emph{Inverse mean curvature flow for spacelike graphic
hypersurfaces with boundary in Lorentz-Minkowski space
$\mathbb{R}^{n+1}_{1}$ }, available online at arXiv:2104.10600v5.

\bibitem{gm-2}  Y. Gao, J. Mao, \emph{Inverse Gauss curvature flow in a time cone of Lorentz-Minkowski space
$\mathbb{R}^{n+1}_{1}$}, available online at  arXiv:2108.08686.

\bibitem{GLM} Y. Gao, J. Li, J. Mao, Z. Q. Xie,
\emph{Curvature estimates for spacelike graphic hypersurfaces in
Lorentz-Minkowski space $\mathbb{R}^{n+1}_{1}$}, available online at
arXiv:2111.01345.

\bibitem{ggm} Y. Gao, Y. L. Gao, J. Mao, \emph{The Dirichlet problem for a class of Hessian quotient equations in Lorentz-Minkowski space
$\mathbb{R}^{n+1}_{1}$}, available online at arXiv:2111.02028.


\bibitem{glm} Y. Gao, C. Y. Liu, J. Mao, \emph{Prescribed Weingarten curvature equations in warped product
manifolds}, available online at arXiv:2111.01636v2.


\bibitem{gp} P. Gauduchon, \emph{La $1$-forme de torsion d'une vari\'{e}t\'{e}
hermitienne compacte}, Math. Ann. {\bf267}(4) (1984) 495--518.


\bibitem{G}C. Gerhardt, \emph{Curvature Problems}, Series in Geometry and Topology, International Press of Boston Inc., Sommerville, 2006.


\bibitem{gt1} D. Gilbarge, N. S. Trudinger, \emph{Elliptic Partial Differential Equations of the Second Order}, second edition, Springer,
1998.


\bibitem{hl1} F. R. Harvey, H. B. Lawson, \emph{Dirichlet duality and the nonlinear
Dirichlet problem}, Commun. Pure Appl. Math. {\bf62}(3) (2009)
396--443.

\bibitem{hl2-1} F. R. Harvey, H. B. Lawson, \emph{ Dirichlet duality and the nonlinear Dirichlet problem on Riemannian manifolds}, J. Differ.
Geom. {\bf 88}(3) (2011) 395--482.

\bibitem{hl2} F. R. Harvey, H. B. Lawson, \emph{Geometric plurisubharmonicity and convexity: An introduction}, Adv. Math. {\bf230}
(2012) 2428--2456.


\bibitem{hl3} F. R. Harvey, H. B. Lawson, \emph{Foundations of $p$-convexity and $p$-plurisubharmonicity in Riemannian geometry},
 available online at arXiv:1111.3895v2.


 \bibitem{hl4} F. R. Harvey, H. B. Lawson, \emph{Existence, uniqueness and removable singularities for nonlinear partial differential
equations in geometry}, in: Surveys in Differential Geometry, vol.
18, International Press, Somerville, MA, 2013, pp. 103--156.


\bibitem{hs}G. Huisken, C. Sinestrari, \emph{Convexity estimates for mean curvature flow and singularities of mean convex surfaces},
Acta Math.  {\bf183}(1) (1999) 45--70.



\bibitem{lrw} M. Li, C. Y. Ren, Z. Z. Wang, \emph{An interior estimate for convex solutions and a rigidity theorem}, J. Funct. Anal. {\bf 270}(7)
(2016) 2691--2714.


\bibitem{lsy} S. Y. Li, \emph{On the Dirichlet problems for symmetric function equations of the eigenvalues of the complex Hessian},
Asian J. Math. {\bf 8}(1) (2004) 87--106.


\bibitem{L}G. Lieberman, \emph{Second Order Parabolic Differential Equations}, World Scientific, 1996.


\bibitem{mt1} M. Lin, N. S. Trudinger, \emph{On some inequalities for elementary symmetric functions}, Bull. Aust. Math. Soc. {\bf 50} (1994)
317--326.

\bibitem{t2}N. S. Trudinger, \emph{The Dirichlet problem for the prescribed curvature equations}, Arch. Ration. Mech. Anal. {\bf 111} (1990)
 153--179.



\bibitem{po1} A. V. Pogorelov, \emph{The Minkowski Multidimensional Problem}, John Wiley, 1978.



\bibitem{sz} W. M. Sheng, Y. Zhang, \emph{A class of fully nonlinear equations arising
from conformal geometry}, Math. Z. {\bf255}(1) (2007) 17--34.


\bibitem{ys} R. Schoen, S. T. Yau,  \emph{Lectures on Differential
Geometry}, International Press, 1994.



\bibitem{stw} G. Sz\'{e}kelyhidi, V. Tosatti, B. Weinkove, \emph{Gauduchon metrics with
prescribed volume form}, Acta Math. {\bf219}(1) (2017) 181--211.


\bibitem{gs1} G. Sz\'{e}kelyhidi, \emph{Fully non-linear elliptic equations on compact Hermitian manifolds}, J. Differ. Geom. {\bf109}(2) (2018)
337--378.

\bibitem{tow1} V. Tosatti, B. Weinkove, \emph{The Monge-Amp\`{e}re equation for
$(n-1)$-plurisubharmonic functions on a compact K\"{a}hler
manifold}, J. Amer. Math. Soc. {\bf30}(2) (2017) 311--346.

\bibitem{tow2} V. Tosatti, B. Weinkove,  \emph{Hermitian metrics, $(n-1,n-1)$ forms and Monge-Amp\`{e}re equations}, J. Reine Angew.
Math. {\bf755} (2019) 67--101.


\bibitem{nt} N. S. Trudinger, On the Dirichlet problem for Hessian equations, Acta
Math. {\bf 175} (1995) 151--164.


\bibitem{w1}  X. J. Wang, \emph{$k$ Hessian equation}, in: Geometric Analysis and PDEs, in: Lecture Notes in Math., 2009, pp. 177--252.



\end{thebibliography}
\end{document}